\let\oldmarginpar\marginpar
\renewcommand\marginpar[1]{\-\oldmarginpar[\raggedleft\footnotesize #1]%
{\raggedright\footnotesize #1}}
\begin{document}

\newtheorem{theorem}{Theorem}[section]
\newtheorem{corollary}[theorem]{Corollary}
\newtheorem{lemma}[theorem]{Lemma}
\newtheorem{proposition}[theorem]{Proposition}
\theoremstyle{definition}
\newtheorem{definition}[theorem]{Definition}
\theoremstyle{remark}
\newtheorem{remark}[theorem]{Remark}
\theoremstyle{definition}
\newtheorem{example}[theorem]{Example}

\def\R{{\mathbb R}}
\def\H{{\mathbb H}}
\def\rank{{\text{rank}\,}}
\def\bd{{\partial}}

\title{Semi-slant submersions}

\author{Kwang-Soon Park}
\address{Department of Mathematical Sciences, Seoul National University, Seoul 151-747, Republic of Korea}
\email{parkksn@gmail.com}

\author{Rajendra Prasad }
\address{Department of Mathematics and Astronomy, University of Lucknow, Lucknow-226007, India}
\email{rp.manpur@rediffmail.com}




\keywords{Riemannian submersion \and slant angle \and harmonic map
\and totally geodesic}

\subjclass[2000]{53C15; 53C43.}   

\begin{abstract}
Let $F$ be a Riemannian submersion from an almost Hermitian
manifold $(M,g_M,J)$ onto a Riemannian manifold $(N,g_N)$. We
introduce the notion of the semi-slant submersion. And then we
obtain some properties on it. In particular, we give some examples
for it.
\end{abstract}

\maketitle
\section{Introduction}\label{intro}
\addcontentsline{toc}{section}{Introduction}

Let $F$ be a $C^{\infty}$-submersion from a semi-Riemannian
manifold $(M,g_M)$ onto a semi-Riemannian manifold $(N,g_N)$. Then
according to the conditions on the map $F : (M,g_M) \mapsto
(N,g_N)$, we have the following submersions:

semi-Riemannian submersion and Lorentzian submersion \cite{FIP},
Riemannian submersion (\cite{G}, \cite{O}), slant submersion
(\cite{C}, \cite{S}), almost Hermitian submersion \cite{W},
contact-complex submersion \cite{IIMV}, quaternionic submersion
\cite{IMV}, almost h-slant submersion and h-slant submersion
\cite{P}, semi-invariant submersion \cite{S2}, h-semi-invariant
submersion \cite{P2}, etc. As we know, Riemannian submersions are
related with physics and have their applications in the Yang-Mills
theory (\cite{BL2}, \cite{W2}), Kaluza-Klein theory (\cite{BL},
\cite{IV}), Supergravity and superstring theories (\cite{IV2},
\cite{M}), etc. Let $(M, g_M)$ and $(N, g_N)$ be Riemannian
manifolds and $F : M \mapsto N$ a $C^{\infty}-$submersion. The map
$F$ is said to be {\em Riemannian submersion} if the differential
$F_*$ preserves the lengths of horizontal vectors \cite{IMV}. Let
$(M, g_M, J)$ be an almost Hermitian manifold. A Riemannian
submersion $F : (M, g_M, J)\mapsto (N, g_N)$ is called a {\em
slant submersion} if the angle $\theta(X)$ between $JX$ and the
space $\ker (F_*)_p$ is constant for any nonzero $X\in T_p M$ and
$p\in M$ \cite{S}. We call $\theta(X)$ a {\em slant angle}. A
Riemannian submersion $F : (M, g_M, J)\mapsto (N, g_N)$ is called
a {\em semi-invariant submersion} if there is a distribution
$\mathcal{D}_1 \subset \ker F_*$ such that
$$
\ker F_*=\mathcal{D}_1\oplus \mathcal{D}_2, \
 J(\mathcal{D}_1)=\mathcal{D}_1, \ J(\mathcal{D}_2)\subset (\ker
F_*)^{\perp},
$$
where $\mathcal{D}_2$ is the orthogonal complement of
$\mathcal{D}_1$ in $\ker F_*$ \cite{S}. Let $(M, g_M)$ and $(N,
g_N)$ be Riemannian manifolds and $F : (M, g_M) \mapsto (N, g_N)$
a smooth map. The second fundamental form of $F$ is given by
$$
(\nabla F_*)(X,Y) := \nabla^F _X F_* Y-F_* (\nabla _XY) \quad
\text{for} \ X,Y\in \Gamma(TM),
$$
where $\nabla^F$ is the pullback connection and we denote
conveniently by $\nabla$ the Levi-Civita connections of the
metrics $g_M$ and $g_N$ [4]. Recall that $F$ is said to be {\em
harmonic} if $trace (\nabla F_*)=0$ and $F$ is called a {\em
totally geodesic} map if $(\nabla F_*)(X,Y)=0$ for $X,Y\in \Gamma
(TM)$ \cite{BW}. The paper is organized as follows. In section 2
we give the definition of the semi-slant submersion and obtain
some interesting properties on them. In section 3 we construct
some examples for the semi-slant submersion.

\section{Semi-slant submersions}\label{semi}

\begin{definition}
Let $(M,g_M,J)$ be an almost Hermitian manifold and $(N,g_N)$ a
Riemannian manifold. A Riemannian submersion $F : (M,g_M,J)\mapsto
(N,g_N)$ is called a {\em semi-slant submersion} if there is a
distribution $\mathcal{D}_1\subset \ker F_*$ such that
$$
\ker F_* =\mathcal{D}_1\oplus \mathcal{D}_2, \
J(\mathcal{D}_1)=\mathcal{D}_1,
$$
and the angle $\theta=\theta(X)$ between $JX$ and the space
$(\mathcal{D}_2)_q$ is constant for nonzero $X\in
(\mathcal{D}_2)_q$ and $q\in M$, where $\mathcal{D}_2$ is the
orthogonal complement of $\mathcal{D}_1$ in $\ker F_*$.
\end{definition}

We call the angle $\theta$ a {\em semi-slant angle}.

\noindent Let $F : (M,g_M,J)\mapsto (N,g_N)$ be a semi-slant
submersion. Then there is a distribution $\mathcal{D}_1\subset
\ker F_*$ such that
$$
\ker F_* =\mathcal{D}_1\oplus \mathcal{D}_2, \
J(\mathcal{D}_1)=\mathcal{D}_1,
$$
and the angle $\theta=\theta(X)$ between $JX$ and the space
$(\mathcal{D}_2)_q$ is constant for nonzero $X\in
(\mathcal{D}_2)_q$ and $q\in M$, where $\mathcal{D}_2$ is the
orthogonal complement of $\mathcal{D}_1$ in $\ker F_*$.

Then for $X\in \Gamma(\ker F_*)$, we have
$$
X = PX+QX,
$$
where $PX\in \Gamma(\mathcal{D}_1)$ and $QX\in
\Gamma(\mathcal{D}_2)$.

For $X\in \Gamma(\ker F_*)$, we get
$$
JX = \phi X+\omega X,
$$
where $\phi X\in \Gamma(\ker F_*)$ and $\omega X\in \Gamma((\ker
F_*)^{\perp})$.

For $Z\in \Gamma((\ker F_*)^{\perp})$, we obtain
$$
JZ = BZ+CZ,
$$
where $BZ\in \Gamma(\ker F_*)$ and $CZ\in \Gamma((\ker
F_*)^{\perp})$.

For $U\in \Gamma(TM)$, we have
$$
U = \mathcal{V}U+\mathcal{H}U,
$$
where $\mathcal{V}U\in \Gamma(\ker F_*)$ and $\mathcal{H}U\in
\Gamma((\ker F_*)^{\perp})$.

Then
$$
(\ker F_*)^{\perp} = \omega \mathcal{D}_2 \oplus \mu,
$$
where $\mu$ is the orthogonal complement of $\omega \mathcal{D}_2$
in $(\ker F_*)^{\perp}$ and is invariant  under $J$. Furthermore,

{\setlength\arraycolsep{2pt}
\begin{eqnarray*}
& & \phi \mathcal{D}_1 = \mathcal{D}_1, \omega \mathcal{D}_1 = 0, \phi \mathcal{D}_2 \subset \mathcal{D}_2,
B((\ker F_*)^{\perp}) = \mathcal{D}_2        \\
& & \phi^2+B\omega = -id, C^2+\omega B = -id, \omega \phi +C\omega
= 0, BC+\phi B = 0.
\end{eqnarray*}}

Define the tensors $\mathcal{T}$ and $\mathcal{A}$ by
\begin{align*}
  &\mathcal{A}_E F= \mathcal{H}\nabla_{\mathcal{H}E} \mathcal{V}F+\mathcal{V}\nabla_{\mathcal{H}E} \mathcal{H}F \\
   &\mathcal{T}_E F= \mathcal{H}\nabla_{\mathcal{V}E} \mathcal{V}F+\mathcal{V}\nabla_{\mathcal{V}E}
   \mathcal{H}F
\end{align*}
for vector fields $E, F$ on $M$, where $\nabla$ is the Levi-Civita
connection of $g_M$. Define
$$
(\nabla_X \phi)Y := \widehat{\nabla}_X \phi Y-\phi
\widehat{\nabla}_X Y
$$
and
$$
(\nabla_X \omega)Y := \mathcal{H}\nabla_X \omega
Y-\omega\widehat{\nabla}_X Y
$$
for $X,Y\in \Gamma(\ker F_*)$, where $\widehat{\nabla}_X Y :=
\mathcal{V}\nabla_X Y$. Then we easily have

\begin{lemma}
Let $(M,g_M,J)$ be a K\"{a}hler manifold and $(N,g_N)$ a
Riemannian manifold. Let $F : (M,g_M,J) \mapsto (N,g_N)$ be a
semi-slant submersion. Then we get
\begin{enumerate}
\item
\begin{align*}
  &\widehat{\nabla}_X \phi Y+\mathcal{T}_X \omega Y = \phi\widehat{\nabla}_X Y+B\mathcal{T}_X Y    \\
  &\mathcal{T}_X \phi Y+\mathcal{H}\nabla_X \omega Y =
  \omega\widehat{\nabla}_X Y+C\mathcal{T}_X Y
\end{align*}
for $X,Y\in \Gamma(\ker F_*)$.
\item
\begin{align*}
  &\mathcal{V}\nabla_Z BW+\mathcal{A}_Z CW = \phi\mathcal{A}_Z W+B\mathcal{H}\nabla_Z W    \\
  &\mathcal{A}_Z BW+\mathcal{H}\nabla_Z CW = \omega\mathcal{A}_Z
  W+C\mathcal{H}\nabla_Z W
\end{align*}
for $Z,W\in \Gamma((\ker F_*)^{\perp})$.
\item
\begin{align*}
  &\widehat{\nabla}_X BZ+\mathcal{T}_X CZ = \phi\mathcal{T}_X Z+B\mathcal{H}\nabla_X Z    \\
  &\mathcal{T}_X BZ+\mathcal{H}\nabla_X CZ =
  \omega\mathcal{T}_X Z+C\mathcal{H}\nabla_X Z
\end{align*}
for $X\in \Gamma(\ker F_*)$ and $Z\in \Gamma((\ker F_*)^{\perp})$.
\end{enumerate}
\end{lemma}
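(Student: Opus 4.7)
The plan is to derive all six identities from the single Kähler condition $\nabla J=0$ by applying it to vector fields of the appropriate type and then projecting onto the vertical and horizontal distributions. Recall that $\nabla J=0$ means $\nabla_{U}(JV)=J\nabla_{U}V$ for all $U,V\in\Gamma(TM)$. The strategy is that each of the three parts corresponds to a choice of type (vertical or horizontal) for the two arguments, and the two displayed equations in each part come from projecting the single identity $\nabla_U(JV)=J\nabla_U V$ onto $\ker F_\ast$ and $(\ker F_\ast)^\perp$ respectively.

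For part (1), I would take $X,Y\in\Gamma(\ker F_\ast)$ and start from $\nabla_X(JY)=J\nabla_X Y$. On the left I would substitute $JY=\phi Y+\omega Y$ and split $\nabla_X\phi Y$ (a derivative in a vertical direction of a vertical field) into $\widehat{\nabla}_X\phi Y+\mathcal{T}_X\phi Y$ using the definition of $\mathcal{T}$, and split $\nabla_X\omega Y$ (a derivative in a vertical direction of a horizontal field) into $\mathcal{T}_X\omega Y+\mathcal{H}\nabla_X\omega Y$. On the right I would decompose $\nabla_X Y=\widehat{\nabla}_X Y+\mathcal{T}_X Y$ and then apply $J$, replacing $J\widehat{\nabla}_X Y=\phi\widehat{\nabla}_X Y+\omega\widehat{\nabla}_X Y$ (since $\widehat{\nabla}_X Y$ is vertical) and $J\mathcal{T}_X Y=B\mathcal{T}_X Y+C\mathcal{T}_X Y$ (since $\mathcal{T}_X Y$ is horizontal). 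Comparing vertical and horizontal components yields the two displayed formulas.

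Parts (2) and (3) are carried out in exactly the same fashion, with the only bookkeeping change being the restrictions of $\mathcal{T}$ and $\mathcal{A}$ one uses. For (2), I would take $Z,W\in\Gamma((\ker F_\ast)^\perp)$, write $JW=BW+CW$, and use that for $Z$ horizontal $\mathcal{H}\nabla_Z(BW)=\mathcal{A}_Z BW$ and $\mathcal{V}\nabla_Z(CW)=\mathcal{A}_Z CW$, together with $\mathcal{V}\nabla_Z W=\mathcal{A}_Z W$ on the right side; then split by type. For (3), I would take $X\in\Gamma(\ker F_\ast)$ and $Z\in\Gamma((\ker F_\ast)^\perp)$, write $JZ=BZ+CZ$, use that $\mathcal{V}\nabla_X(BZ)=\widehat{\nabla}_X BZ$, $\mathcal{H}\nabla_X(BZ)=\mathcal{T}_X BZ$, and $\mathcal{V}\nabla_X(CZ)=\mathcal{T}_X CZ$, together with $\mathcal{T}_X Z=\mathcal{V}\nabla_X Z$ on the right side, and again read off the two projections.

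There is no genuine obstacle in this argument; it is a bookkeeping lemma whose entire content is the Kähler identity plus the definitions of $\phi,\omega,B,C,\mathcal{T},\mathcal{A}$. The only point requiring minor care is recognizing, in each context, which clauses in the defining formulas for $\mathcal{T}$ and $\mathcal{A}$ survive: namely, when the second argument is vertical the surviving piece is the horizontal part of $\nabla$, and when it is horizontal the surviving piece is the vertical part. Once that is kept straight throughout, each pair of identities falls out immediately by separately equating the $\Gamma(\ker F_\ast)$ and $\Gamma((\ker F_\ast)^\perp)$ components of $\nabla_U(JV)=J\nabla_U V$.
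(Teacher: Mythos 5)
Your proposal is correct and is exactly the computation the paper intends: the paper states this lemma with no written proof ("Then we easily have"), the content being precisely the Kähler identity $\nabla_U(JV)=J\nabla_UV$ combined with the definitions of $\phi,\omega,B,C$ and the vertical/horizontal projections via $\mathcal{T}$ and $\mathcal{A}$. Your bookkeeping of which pieces of $\mathcal{T}$ and $\mathcal{A}$ survive in each case is accurate, so the six identities follow as you describe.
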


\begin{theorem}
Let $F$ be a semi-slant submersion from an almost Hermitian
manifold $(M,g_M,J)$ onto a Riemannian manifold $(N,g_N)$. Then
the complex distribution $\mathcal{D}_1$ is integrable if and only
if we have
$$
\omega (\widehat{\nabla}_X Y-\widehat{\nabla}_Y X) =
C(\mathcal{T}_Y X-\mathcal{T}_X Y) \quad \text{for} \ X,Y\in
\Gamma(\mathcal{D}_1).
$$
\end{theorem}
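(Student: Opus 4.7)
The plan is to extract the integrability criterion from the second identity of Lemma~2.2(1), using crucially that $\omega$ annihilates $\mathcal D_1$. The starting observation is that $\ker F_*$ is always integrable (as the tangent distribution to the fibres of $F$) and $X,Y\in\Gamma(\mathcal D_1)\subset\Gamma(\ker F_*)$, so the bracket $[X,Y]$ is vertical and equals $\widehat\nabla_X Y-\widehat\nabla_Y X$. Thus $\mathcal D_1$ is integrable iff this bracket always lies in $\mathcal D_1$, i.e.\ iff its $\mathcal D_2$-component vanishes. Since $\omega$ annihilates $\mathcal D_1$ and restricts to an injection $\omega\colon\mathcal D_2\to(\ker F_*)^\perp$ (from the constant semi-slant angle: a nonzero $Z\in\mathcal D_2$ with $\omega Z=0$ would give $JZ\in\mathcal D_2$, contradicting the direct-sum splitting of $\ker F_*$), integrability of $\mathcal D_1$ is equivalent to $\omega(\widehat\nabla_X Y-\widehat\nabla_Y X)=0$.

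Next, I apply the second formula of Lemma~2.2(1) to $X,Y\in\Gamma(\mathcal D_1)$. Since $\omega X=\omega Y=0$, the term $\mathcal H\nabla_X\omega Y$ drops and one is left with $\mathcal T_X\phi Y=\omega\widehat\nabla_X Y+C\mathcal T_X Y$. Writing the analogous formula with $X,Y$ swapped and subtracting yields
\[
\omega(\widehat\nabla_X Y-\widehat\nabla_Y X)-C(\mathcal T_Y X-\mathcal T_X Y)=\mathcal T_X\phi Y-\mathcal T_Y\phi X.
\]

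The main obstacle is to reconcile this identity with the theorem's criterion. Invoking O'Neill's symmetry $\mathcal T_U V=\mathcal T_V U$ on vertical pairs, the term $C(\mathcal T_Y X-\mathcal T_X Y)$ vanishes identically, and the displayed relation collapses to $\omega(\widehat\nabla_X Y-\widehat\nabla_Y X)=\mathcal T_X\phi Y-\mathcal T_Y\phi X$. Hence the equation in the theorem, $\omega(\widehat\nabla_X Y-\widehat\nabla_Y X)=C(\mathcal T_Y X-\mathcal T_X Y)$, holds iff both sides are zero, which by the first paragraph is precisely the integrability of $\mathcal D_1$. The only genuinely new ingredient beyond Lemma~2.2(1) is the injectivity of $\omega|_{\mathcal D_2}$ needed to convert the vanishing of the $\mathcal D_2$-component of $[X,Y]$ into the vanishing of $\omega[X,Y]$.
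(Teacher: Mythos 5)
Your argument is correct in substance but takes a different route from the paper, and it is worth separating what actually carries your proof from what is decoration. The paper never touches Lemma 2.2: it writes $J[X,Y]=J(\nabla_X Y-\nabla_Y X)$, decomposes $\nabla_X Y=\widehat{\nabla}_X Y+\mathcal{T}_X Y$, applies $J=\phi+\omega$ on vertical and $J=B+C$ on horizontal parts, and pairs against an arbitrary horizontal $Z$; its criterion is exactly the vanishing of the horizontal part of $J[X,Y]$, and no K\"{a}hler condition enters. You instead use that $[X,Y]=\widehat{\nabla}_X Y-\widehat{\nabla}_Y X$ is vertical and that $\mathcal{T}$ is symmetric on vertical fields, so the right-hand side $C(\mathcal{T}_Y X-\mathcal{T}_X Y)$ vanishes identically and the stated criterion is simply $\omega[X,Y]=0$; this simplification is not made in the paper and it does make the equivalence transparent. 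However, your announced main tool, the second identity of Lemma 2.2(1), is proved only for K\"{a}hler $M$ (it needs $\nabla J=0$), whereas the theorem assumes only an almost Hermitian structure, so the identity $\mathcal{T}_X\phi Y=\omega\widehat{\nabla}_X Y+C\mathcal{T}_X Y$ is not available at this level of generality. Fortunately your final chain (criterion $\Leftrightarrow$ $\omega[X,Y]=0$ $\Leftrightarrow$ integrability of $\mathcal{D}_1$) never uses the identity you derived from it, so that detour should simply be deleted rather than repaired.

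The one step needing correction is your justification that $\omega$ is injective on $\mathcal{D}_2$. If $Z\in\mathcal{D}_2$ is nonzero with $\omega Z=0$, then $JZ=\phi Z\in\mathcal{D}_2$, which does \emph{not} contradict the splitting $\ker F_*=\mathcal{D}_1\oplus\mathcal{D}_2$; it merely forces $\cos\theta(Z)=|\phi Z|/|JZ|=1$, i.e.\ the semi-slant angle $\theta$ to be $0$. Thus $\omega|_{\mathcal{D}_2}$ is injective precisely when $\theta\neq 0$; when $\theta=0$ the implication ``$\omega[X,Y]=0\Rightarrow[X,Y]\in\mathcal{D}_1$'' fails, the displayed condition holds vacuously, and $\mathcal{D}_1$ need not be integrable. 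To be fair, the paper's own proof glosses over exactly the same point in its closing sentence, so your proof is on the same footing as the paper's once $\theta\neq0$ is assumed (or injectivity of $\omega$ on $\mathcal{D}_2$ is stated as a hypothesis); but the reason you give for injectivity should be replaced by the angle argument above.
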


\begin{proof}
For $X,Y\in \Gamma(\mathcal{D}_1)$ and $Z\in \Gamma((\ker
F_*)^{\perp})$, since $[X,Y]\in \Gamma(\ker F_*)$, we obtain
\begin{align*}
g_M (J[X,Y], Z)&= g_M (J(\nabla_X Y-\nabla_Y X), Z)    \\
      &= g_M (\phi \widehat{\nabla}_X Y+\omega \widehat{\nabla}_X Y+B\mathcal{T}_X Y+C\mathcal{T}_X Y
      -\phi\widehat{\nabla}_Y X-\omega\widehat{\nabla}_Y X    \\
      &\ \ \ -B\mathcal{T}_Y X-C\mathcal{T}_Y X, Z)    \\
      &= g_M (\omega \widehat{\nabla}_X Y+C\mathcal{T}_X Y-\omega\widehat{\nabla}_Y X-C\mathcal{T}_Y X, Z).
\end{align*}
Therefore, we have the result.
\end{proof}

Similarly, we get

\begin{theorem}
Let $F$ be a semi-slant submersion from an almost Hermitian
manifold $(M,g_M,J)$ onto a Riemannian manifold $(N,g_N)$. Then
the slant distribution $\mathcal{D}_2$ is integrable if and only
if we obtain
$$
P(\phi(\widehat{\nabla}_X Y-\widehat{\nabla}_Y X)+B(\mathcal{T}_X
Y-\mathcal{T}_Y X)) = 0 \quad \text{for} \ X,Y\in
\Gamma(\mathcal{D}_2).
$$
\end{theorem}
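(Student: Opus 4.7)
The plan is to follow the template of Theorem 2.2 but to test $[X,Y]$ against the complex distribution $\mathcal{D}_1$ rather than against the horizontal distribution. For $X,Y\in\Gamma(\mathcal{D}_2)\subset\Gamma(\ker F_*)$, the bracket $[X,Y]$ is automatically vertical because $\ker F_*$ is integrable, so the integrability of $\mathcal{D}_2$ is equivalent to $P[X,Y]=0$, i.e., to $g_M([X,Y],W)=0$ for every $W\in\Gamma(\mathcal{D}_1)$.

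The key step is to move $J$ inside the pairing. Since $J$ is an isometry of $g_M$ and $J(\mathcal{D}_1)=\mathcal{D}_1$, we have $g_M([X,Y],W)=g_M(J[X,Y],JW)$ with $JW$ again ranging over $\mathcal{D}_1$ as $W$ does, so the integrability criterion is equivalent to the vanishing of the $\mathcal{D}_1$-component of $J[X,Y]$. Writing $\nabla_X Y=\widehat{\nabla}_X Y+\mathcal{T}_X Y$ for vertical $X,Y$ and expanding $J$ through $J|_{\ker F_*}=\phi+\omega$ and $J|_{(\ker F_*)^{\perp}}=B+C$ gives
\[
J[X,Y] = \phi(\widehat{\nabla}_X Y-\widehat{\nabla}_Y X) + B(\mathcal{T}_X Y-\mathcal{T}_Y X) + \omega(\widehat{\nabla}_X Y-\widehat{\nabla}_Y X) + C(\mathcal{T}_X Y-\mathcal{T}_Y X).
\]
The last two summands lie in $(\ker F_*)^{\perp}$ and are orthogonal to every $W\in\Gamma(\mathcal{D}_1)$, so they drop out of the pairing, and the criterion reduces to $P\bigl(\phi(\widehat{\nabla}_X Y-\widehat{\nabla}_Y X)+B(\mathcal{T}_X Y-\mathcal{T}_Y X)\bigr)=0$, exactly as claimed.

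There is no real obstacle; the only point worth being careful about is the choice of test subspace. In Theorem 2.2 one tested against $(\ker F_*)^{\perp}$ because the needed condition was that $J[X,Y]$ have no horizontal component; here the $J$-invariance of $\mathcal{D}_1$ is what lets us conjugate by $J$ and produce the projection $P$ onto $\mathcal{D}_1$ in the final statement. The argument needs only $J$-orthogonality, so it works in the almost Hermitian (not necessarily K\"ahler) setting in which the theorem is stated.
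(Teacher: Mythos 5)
Your proof is correct and follows essentially the route the paper intends when it writes ``Similarly, we get'' after Theorem 2.2: decompose $J[X,Y]$ via $\widehat{\nabla}$, $\mathcal{T}$, $\phi$, $\omega$, $B$, $C$ and test against $\Gamma(\mathcal{D}_1)$, the $J$-invariance of $\mathcal{D}_1$ supplying the equivalence between $P[X,Y]=0$ and the vanishing of the $\mathcal{D}_1$-component of $J[X,Y]$. Your explicit justification of that last equivalence (which the paper leaves implicit) and your remark that no K\"ahler hypothesis is needed are both accurate.
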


\begin{lemma}
Let $(M,g_M,J)$ be a K\"{a}hler manifold and $(N,g_N)$ a
Riemannian manifold. Let $F : (M,g_M,J) \mapsto (N,g_N)$ be a
semi-slant submersion. Then the slant distribution $\mathcal{D}_2$
is integrable if and only if we obtain
$$
P(\widehat{\nabla}_X \phi Y-\widehat{\nabla}_Y \phi
X+\mathcal{T}_X \omega Y-\mathcal{T}_Y \omega X) = 0 \quad
\text{for} \ X,Y\in \Gamma(\mathcal{D}_2).
$$
\end{lemma}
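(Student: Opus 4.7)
The plan is to derive this lemma as a direct corollary of the preceding theorem (the characterization in terms of $P(\phi(\widehat{\nabla}_X Y-\widehat{\nabla}_Y X)+B(\mathcal{T}_X Y-\mathcal{T}_Y X))=0$), by using the K\"ahler hypothesis to convert the expression $\phi\widehat{\nabla}_X Y + B\mathcal{T}_X Y$ into $\widehat{\nabla}_X \phi Y + \mathcal{T}_X \omega Y$. The extra structural assumption on $M$ (K\"ahler, not merely almost Hermitian) enters precisely because Lemma 2.2 is only stated for K\"ahler manifolds, while the preceding theorem holds in the almost Hermitian setting.

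First, I would invoke the preceding theorem: $\mathcal{D}_2$ is integrable if and only if
$$
P\bigl(\phi(\widehat{\nabla}_X Y-\widehat{\nabla}_Y X)+B(\mathcal{T}_X Y-\mathcal{T}_Y X)\bigr) = 0 \quad \text{for } X,Y\in\Gamma(\mathcal{D}_2).
$$
Next, since $\mathcal{D}_2\subset \ker F_*$, I would apply the first identity of Lemma 2.2(1) to the pair $(X,Y)$ and to the swapped pair $(Y,X)$, namely
$$
\widehat{\nabla}_X \phi Y+\mathcal{T}_X \omega Y = \phi\widehat{\nabla}_X Y+B\mathcal{T}_X Y,\qquad
\widehat{\nabla}_Y \phi X+\mathcal{T}_Y \omega X = \phi\widehat{\nabla}_Y X+B\mathcal{T}_Y X,
$$
and subtract, obtaining
$$
\bigl(\widehat{\nabla}_X \phi Y-\widehat{\nabla}_Y \phi X\bigr) + \bigl(\mathcal{T}_X \omega Y-\mathcal{T}_Y \omega X\bigr) = \phi(\widehat{\nabla}_X Y-\widehat{\nabla}_Y X)+B(\mathcal{T}_X Y-\mathcal{T}_Y X).
$$
Applying $P$ to both sides and combining with the preceding theorem yields the stated equivalence.

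There is essentially no obstacle: the computation is purely algebraic manipulation of Lemma 2.2(1), and the only subtlety is to note that Lemma 2.2 requires the K\"ahlerian hypothesis (so $\nabla J=0$, which is why the tensorial identities separate neatly into the $\phi,\omega,B,C$ components used above). Once that is observed, the lemma is immediate and I would present the proof in just a few lines.
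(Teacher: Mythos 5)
Your argument is correct. Antisymmetrizing the first identity of Lemma 2.2(1), $\widehat{\nabla}_X \phi Y+\mathcal{T}_X \omega Y = \phi\widehat{\nabla}_X Y+B\mathcal{T}_X Y$, and applying $P$ does turn the integrability criterion of the preceding theorem into exactly the stated condition, and since $\mathcal{D}_2\subset\ker F_*$ the lemma's identities apply; so the equivalence follows. This is, however, a different route from the paper's: the paper does not invoke either the preceding theorem or Lemma 2.2(1), but instead gives a self-contained computation, writing $g_M(J[X,Y],Z)=g_M(\nabla_X JY-\nabla_Y JX,Z)$ for $Z\in\Gamma(\mathcal{D}_1)$ (using $\nabla J=0$ at the level of $J[X,Y]$), decomposing $JY=\phi Y+\omega Y$ and $\nabla_X$ into its vertical and horizontal ($\widehat{\nabla}$, $\mathcal{T}$, $\mathcal{H}\nabla$) parts, and noting that only the vertical terms pair nontrivially with $Z$. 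The mathematical content is the same—your Lemma 2.2(1) is itself just the vertical part of $\nabla_X JY=J\nabla_X Y$—but your presentation buys brevity and makes transparent both that the lemma is the K\"ahler rewriting of the almost-Hermitian theorem and exactly where the K\"ahler hypothesis enters, whereas the paper's direct computation keeps the proof independent of the earlier statements and parallel in style to the neighboring integrability results.
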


\begin{proof}
For $X,Y\in \Gamma(\mathcal{D}_2)$ and $Z\in
\Gamma(\mathcal{D}_1)$, since $[X,Y]\in \Gamma(\ker F_*)$, we have
\begin{align*}
g_M (J[X,Y], Z)&= g_M (\nabla_X JY-\nabla_Y JX, Z)    \\
      &= g_M (\widehat{\nabla}_X \phi Y+\mathcal{T}_X \phi Y+\mathcal{T}_X \omega Y
      +\mathcal{H}\nabla_X \omega Y-\widehat{\nabla}_Y\phi X-\mathcal{T}_Y \phi X    \\
      &\ \ \ -\mathcal{T}_Y \omega X-\mathcal{H}\nabla_Y \omega X, Z)    \\
      &= g_M (\widehat{\nabla}_X \phi Y+\mathcal{T}_X \omega Y-\widehat{\nabla}_Y\phi X
      -\mathcal{T}_Y \omega X, Z).
\end{align*}
Therefore, the result follows.
\end{proof}

In a similar way, we have

\begin{lemma}
Let $(M,g_M,J)$ be a K\"{a}hler manifold and $(N,g_N)$ a Riemannian
manifold. Let $F : (M,g_M,J) \mapsto (N,g_N)$ be a semi-slant
submersion. Then the complex distribution $\mathcal{D}_1$ is
integrable if and only if we get
$$
Q(\widehat{\nabla}_X \phi Y - \widehat{\nabla}_Y \phi X) = 0 \
\text{and} \ \mathcal{T}_X \phi Y = \mathcal{T}_Y \phi X \quad
\text{for} \ X,Y\in \Gamma(\mathcal{D}_1).
$$
\end{lemma}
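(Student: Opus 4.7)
The plan is to adapt the strategy of the preceding lemma, but now to compute the full vector $J[X,Y]$ rather than just pair it against a test field, and to extract the two stated conditions by matching components relative to the two orthogonal decompositions $\ker F_* = \mathcal{D}_1 \oplus \mathcal{D}_2$ and $TM = \ker F_* \oplus (\ker F_*)^{\perp}$. Since $\ker F_*$ is integrable, $[X,Y] \in \Gamma(\ker F_*)$ automatically, so $\mathcal{D}_1$ is integrable iff $Q[X,Y] = 0$.

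First, for $X,Y \in \Gamma(\mathcal{D}_1)$ I use $\omega X = \omega Y = 0$, so that $JX = \phi X$ and $JY = \phi Y$ lie in $\mathcal{D}_1 \subset \ker F_*$. The K\"ahler hypothesis $\nabla J = 0$ then gives $J[X,Y] = \nabla_X \phi Y - \nabla_Y \phi X$, and decomposing each term as $\nabla_X \phi Y = \widehat{\nabla}_X \phi Y + \mathcal{T}_X \phi Y$ (since $\phi Y$ is vertical) yields
\[
J[X,Y] = \bigl(\widehat{\nabla}_X \phi Y - \widehat{\nabla}_Y \phi X\bigr) + \bigl(\mathcal{T}_X \phi Y - \mathcal{T}_Y \phi X\bigr).
\]
Comparing with the canonical split $J[X,Y] = \phi[X,Y] + \omega[X,Y]$ and matching vertical against vertical, horizontal against horizontal, I read off $\phi[X,Y] = \widehat{\nabla}_X \phi Y - \widehat{\nabla}_Y \phi X$ and $\omega[X,Y] = \mathcal{T}_X \phi Y - \mathcal{T}_Y \phi X$.

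Next I project onto $\mathcal{D}_2$. Because $\phi\mathcal{D}_1 = \mathcal{D}_1$ and $\phi\mathcal{D}_2 \subset \mathcal{D}_2$, the projection $Q$ commutes with $\phi$ on $\ker F_*$, so $Q\phi[X,Y] = \phi Q[X,Y]$; and because $\omega\mathcal{D}_1 = 0$, one has $\omega[X,Y] = \omega Q[X,Y]$. The matched identities become
\[
\phi Q[X,Y] = Q\bigl(\widehat{\nabla}_X \phi Y - \widehat{\nabla}_Y \phi X\bigr), \qquad \omega Q[X,Y] = \mathcal{T}_X \phi Y - \mathcal{T}_Y \phi X.
\]
The forward direction is then immediate, and for the converse I invoke the algebraic identity $\phi^2 + B\omega = -\mathrm{id}$ applied to $Q[X,Y] \in \Gamma(\mathcal{D}_2)$: if both $\phi Q[X,Y]$ and $\omega Q[X,Y]$ vanish, this identity collapses to $-Q[X,Y] = \phi(\phi Q[X,Y]) + B(\omega Q[X,Y]) = 0$.

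The computation itself is mostly bookkeeping with the projections $P, Q, \phi, \omega, B, C$; the main conceptual step is recognizing that the two stated conditions correspond exactly to the $\phi$- and $\omega$-images of $Q[X,Y]$, and that together, via $\phi^2 + B\omega = -\mathrm{id}$, they suffice to kill $Q[X,Y]$ itself. This packaging is the one nonroutine ingredient of the argument.
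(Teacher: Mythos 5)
Your proof is correct and is essentially the paper's argument (which is left implicit as ``in a similar way''): you use the K\"ahler identity to write $J[X,Y]=\nabla_X\phi Y-\nabla_Y\phi X$ and split it into the vertical part $\widehat{\nabla}_X\phi Y-\widehat{\nabla}_Y\phi X$ and the horizontal part $\mathcal{T}_X\phi Y-\mathcal{T}_Y\phi X$, exactly as in the proofs of the neighbouring lemmas, only phrased as a direct decomposition of the vector $J[X,Y]$ rather than by pairing against test fields $Z\in\Gamma(\mathcal{D}_2)$ and $Z\in\Gamma((\ker F_*)^{\perp})$. Your converse via $\phi^2+B\omega=-\mathrm{id}$ is valid (and could be shortened by noting that $\phi Q[X,Y]=0$ and $\omega Q[X,Y]=0$ give $JQ[X,Y]=0$, hence $Q[X,Y]=0$ since $J$ is an isomorphism).
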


Define an endomorphism $\widehat{F}$ of $\ker F_*$ by
$$
\widehat{F} := JP+\phi Q,
$$
where $(\nabla_X \widehat{F})Y := \widehat{\nabla}_X
\widehat{F}Y-\widehat{F}\widehat{\nabla}_X Y$ for $X,Y\in
\Gamma(\ker F_*)$. Then it is not difficult to get

\begin{lemma}
Let $F$ be a  semi-slant submersion from a K\"{a}hler manifold
$(M,g_M,J)$ onto a Riemannian manifold $(N,g_N)$. Then we have
$$
(\nabla_X \widehat{F})Y = \phi(\widehat{\nabla}_X
PY-\widehat{\nabla}_X Y)+B\mathcal{T}_X PY+\widehat{\nabla}_X \phi
QY \quad \text{for} \ X,Y\in \Gamma(\ker F_*).
$$
\end{lemma}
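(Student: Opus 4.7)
The plan is to unfold the definition of $(\nabla_X\widehat{F})Y$ and rewrite everything in terms of $\phi$, $B$, and $\mathcal{T}$ using the identities that describe $\mathcal{D}_1$ and the first part of Lemma 2.2.

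First I would expand $\widehat{F}Y = JPY + \phi QY$ and note the crucial simplification: since $PY\in\Gamma(\mathcal{D}_1)$ and $\omega\mathcal{D}_1 = 0$, we have $JPY = \phi PY + \omega PY = \phi PY$. The same identity applied to $P\widehat{\nabla}_X Y\in\Gamma(\mathcal{D}_1)$ gives $JP\widehat{\nabla}_X Y = \phi P\widehat{\nabla}_X Y$, so that
\[
\widehat{F}\widehat{\nabla}_X Y \;=\; JP\widehat{\nabla}_X Y + \phi Q\widehat{\nabla}_X Y \;=\; \phi P\widehat{\nabla}_X Y + \phi Q\widehat{\nabla}_X Y \;=\; \phi\widehat{\nabla}_X Y,
\]
using $P+Q = \mathrm{id}$ on $\ker F_*$. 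This reduces one half of the defining expression to $\phi\widehat{\nabla}_X Y$.

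Next I would handle the term $\widehat{\nabla}_X\widehat{F}Y = \widehat{\nabla}_X JPY + \widehat{\nabla}_X \phi QY = \widehat{\nabla}_X\phi PY + \widehat{\nabla}_X\phi QY$. The first summand is ripe for Lemma 2.2(1): taking $Y$ there to be $PY$ (so that $\omega PY = 0$), the identity $\widehat{\nabla}_X\phi Y + \mathcal{T}_X\omega Y = \phi\widehat{\nabla}_X Y + B\mathcal{T}_X Y$ collapses to
\[
\widehat{\nabla}_X\phi PY \;=\; \phi\widehat{\nabla}_X PY + B\mathcal{T}_X PY.
\]
The second summand $\widehat{\nabla}_X\phi QY$ is kept as is, since it already appears in the statement.

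Finally I would assemble the pieces:
\[
(\nabla_X\widehat{F})Y \;=\; \bigl(\phi\widehat{\nabla}_X PY + B\mathcal{T}_X PY\bigr) + \widehat{\nabla}_X\phi QY - \phi\widehat{\nabla}_X Y,
\]
which is exactly $\phi(\widehat{\nabla}_X PY - \widehat{\nabla}_X Y) + B\mathcal{T}_X PY + \widehat{\nabla}_X\phi QY$. There is no real obstacle here; the only subtlety is recognizing that $J$ and $\phi$ agree on $\mathcal{D}_1$ (so that $JP$ can be replaced by $\phi P$ throughout) and then invoking the $\mathcal{D}_1$-specialization of Lemma 2.2(1) to convert $\widehat{\nabla}_X\phi PY$ into a $\phi\widehat{\nabla}_X PY$ plus $B\mathcal{T}_X PY$ piece. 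The rest is bookkeeping with $P+Q = \mathrm{id}$.
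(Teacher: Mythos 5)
Your proof is correct and is precisely the computation the paper has in mind (the paper states this lemma with no written proof, remarking only that "it is not difficult to get"): unfold $(\nabla_X\widehat{F})Y=\widehat{\nabla}_X\widehat{F}Y-\widehat{F}\widehat{\nabla}_XY$, use $J=\phi$ on $\mathcal{D}_1$ (i.e. $\omega\mathcal{D}_1=0$) together with $P+Q=\mathrm{id}$, and apply the first identity of Lemma 2.2(1) with $PY$ in place of $Y$ so that the $\mathcal{T}_X\omega PY$ term vanishes. No gaps; the Kähler hypothesis needed for Lemma 2.2 is available, so the argument stands as written.
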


\begin{proposition}\label{prop:slant}
Let $F$ be a semi-slant submersion from an almost Hermitian manifold
$(M,g_M,J)$ onto a Riemannian manifold $(N,g_N)$. Then we obtain
$$
\phi^2 X = -\cos^2 \theta X \quad \text{for} \ X\in
\Gamma(\mathcal{D}_2),
$$
where $\theta$ denotes the semi-slant angle of $\mathcal{D}_2$.
\end{proposition}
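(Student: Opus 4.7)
The plan is to mimic the classical slant-submanifold computation (B.-Y.~Chen, adapted by \c{S}ahin for slant submersions) applied to the slant piece $\mathcal{D}_2$. The argument will rely on two structural facts that are already available in the paper: the skew-symmetry of $\phi$ on $\ker F_*$, and the inclusion $\phi \mathcal{D}_2 \subset \mathcal{D}_2$ listed just above the tensor definitions.

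First I would record (or quickly verify) the skew-symmetry $g_M(\phi X, Y) = -g_M(X, \phi Y)$ for $X, Y \in \Gamma(\ker F_*)$. This follows from $JX = \phi X + \omega X$ with $\omega X \in \Gamma((\ker F_*)^\perp)$, so that $g_M(\omega X, Y) = 0$ and hence $g_M(\phi X, Y) = g_M(JX, Y)$; combined with the standard Hermitian identity $g_M(JX, Y) = -g_M(X, JY)$ and the same simplification applied on the right, this yields the claim.

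Next I would interpret the semi-slant angle in terms of $\phi$. For a nonzero $X \in \Gamma(\mathcal{D}_2)$, the decomposition $JX = \phi X + \omega X$ has $\phi X \in \mathcal{D}_2$ (by $\phi \mathcal{D}_2 \subset \mathcal{D}_2$) and $\omega X \in (\ker F_*)^\perp$, so $\phi X$ is precisely the orthogonal projection of $JX$ onto $(\mathcal{D}_2)_q$. By the definition of the angle $\theta$ between $JX$ and $(\mathcal{D}_2)_q$,
$$
\cos\theta \;=\; \frac{|\phi X|}{|JX|} \;=\; \frac{|\phi X|}{|X|},
$$
using that $J$ is an isometry. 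This gives the pointwise identity $|\phi X|^2 = \cos^2\theta \, g_M(X, X)$.

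To conclude, I would polarize this quadratic identity (replacing $X$ by $X+Y$ with $Y \in \Gamma(\mathcal{D}_2)$) to get $g_M(\phi X, \phi Y) = \cos^2\theta \, g_M(X, Y)$ for all $X, Y \in \Gamma(\mathcal{D}_2)$, and then use skew-symmetry of $\phi$ to rewrite the left-hand side as $-g_M(\phi^2 X, Y)$. Since $\phi X \in \mathcal{D}_2$ forces $\phi^2 X \in \mathcal{D}_2$ as well, the resulting identity $g_M(\phi^2 X + \cos^2\theta \, X,\, Y) = 0$ for all $Y \in \Gamma(\mathcal{D}_2)$ pins down $\phi^2 X = -\cos^2\theta\, X$. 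The only step requiring any care is identifying $\phi X$ as the orthogonal projection of $JX$ onto $\mathcal{D}_2$ (not merely onto $\ker F_*$), but this is immediate from $\phi \mathcal{D}_2 \subset \mathcal{D}_2$ already established; beyond that the proof is entirely formal.
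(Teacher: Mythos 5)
Your proof is correct and follows essentially the same route as the paper: both rest on identifying $\phi X$ as the $\mathcal{D}_2$-projection of $JX$, the identity $\cos\theta = |\phi X|/|JX|$, and the skew-symmetry of $\phi$ on $\ker F_*$. If anything, your explicit polarization of $|\phi X|^2=\cos^2\theta\,|X|^2$ to $g_M(\phi^2X+\cos^2\theta\,X,Y)=0$ supplies the small step the paper passes over when it jumps from the scalar identity $\cos^2\theta=-g_M(X,\phi^2X)/|X|^2$ directly to $\phi^2X=-\cos^2\theta\,X$.
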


\begin{proof}
Since
$$
\cos \theta = \frac{g_M (JX, \phi X)}{|JX|\cdot|\phi X|} =
\frac{-g_M (X, \phi^2 X)}{|X|\cdot|\phi X|}
$$
and $\displaystyle{ \cos \theta = \frac{|\phi X|}{|JX|} }$, we have
$$
\cos^2 \theta = -\frac{g_M (X, \phi^2 X)}{|X|^2} \quad \text{for} \
X\in \Gamma(\mathcal{D}_2).
$$
Hence,
$$
\phi^2 X = -\cos^2 \theta X \quad \text{for} \ X\in
\Gamma(\mathcal{D}_2).
$$
\end{proof}

\begin{remark}
In particular, we easily see that the converse of Proposition
\ref{prop:slant} is also true.
\end{remark}

Assume that the semi-slant angle $\theta$ is not equal to
$\displaystyle{\frac{\pi}{2}}$ and define an endomorphism
$\widehat{J}$ of $\ker F_*$ by
$$
\widehat{J} := JP+\frac{1}{\cos \theta}\phi Q.
$$
Then,
\begin{eqnarray} \label{compst}
{\widehat{J}}^2 = -id \quad \text{on} \ \ker F_*.
\end{eqnarray}

\begin{remark}
Let $F$ be a semi-slant submersion from an almost Hermitian manifold
$(M,g_M,J)$ onto a Riemannian manifold $(N,g_N)$. Assume that $\dim
M = 2m$, $\dim N = n$, and $\theta\in [0,\frac{\pi}{2})$. From
(\ref{compst}), we have
$$
\dim (\ker (F_*)_p)=2k \ \text{and} \ \dim ((\ker
(F_*)_p)^{\perp})=2m-2k \quad \text{for} \ p\in M,
$$
where $k$ is a non-negative integer.

Therefore, $n$ must be even.
\end{remark}

\begin{theorem}
Let $F$ be a semi-slant submersion from an almost Hermitian manifold
$(M,g_M,J)$ onto a Riemannian manifold $(N,g_N)$ with the semi-slant
angle $\theta\in [0,\frac{\pi}{2})$. Then $N$ is an even-dimensional
manifold.
\end{theorem}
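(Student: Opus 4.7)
The plan is to invoke the endomorphism $\widehat{J} := JP + \frac{1}{\cos\theta}\phi Q$ of $\ker F_*$ introduced immediately before the theorem: since $\theta \in [0,\pi/2)$ guarantees $\cos\theta \neq 0$, $\widehat{J}$ is well-defined on the vertical distribution, and equation (\ref{compst}) already furnishes the crucial identity $\widehat{J}^2 = -\mathrm{id}$ on $\ker F_*$.

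First I would observe that $\widehat{J}$ equips the vertical space $\ker(F_*)_p$ at each point $p \in M$ with an almost complex structure in the linear-algebra sense. The standard fact that any real vector space carrying an endomorphism squaring to $-\mathrm{id}$ is even-dimensional then forces $\dim\ker(F_*)_p = 2k$ for some non-negative integer $k$, which is precisely what is recorded in the preceding remark.

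Next, because $(M, g_M, J)$ is almost Hermitian, $\dim M = 2m$ is even. Taking orthogonal complements inside $T_p M$ gives $\dim (\ker(F_*)_p)^\perp = 2m - 2k$, which is again even. Finally, since $F$ is a Riemannian submersion, $(F_*)_p$ restricts to a linear isometry from $(\ker(F_*)_p)^\perp$ onto $T_{F(p)}N$, and hence $\dim N = 2m - 2k$, an even number.

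There is no real obstacle to overcome: the theorem is essentially a repackaging of the preceding remark, and the only non-trivial ingredient, namely the existence of an almost complex structure on the vertical distribution, is already established via (\ref{compst}). The proof just needs to transport the parity conclusion from the vertical to the horizontal distribution via the dimension formula $\dim M = \dim\ker F_* + \dim(\ker F_*)^\perp$, and then across to $N$ via the Riemannian submersion property.
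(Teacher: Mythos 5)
Your proposal is correct and follows exactly the argument the paper itself gives (in the remark preceding the theorem): $\widehat{J}^2=-\mathrm{id}$ from (\ref{compst}) makes each fiber even-dimensional, evenness of $\dim M$ then makes the horizontal spaces even-dimensional, and the submersion identifies these with $T_{F(p)}N$. No discrepancies to report.
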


\begin{proposition}
Let $F$ be a  semi-slant submersion from a K\"{a}hler manifold
$(M,g_M,J)$ onto a Riemannian manifold $(N,g_N)$. Then the
distribution $\ker F_*$ defines a totally geodesic foliation if and
only if
$$
\omega (\widehat{\nabla}_X \phi Y+\mathcal{T}_X \omega
Y)+C(\mathcal{T}_X \phi Y+\mathcal{H}\nabla_X \omega Y) = 0 \quad
\text{for} \ X,Y\in \Gamma(\ker F_*).
$$
\end{proposition}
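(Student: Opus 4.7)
The plan is to reduce the stated condition to the vanishing of $\mathcal{T}_X Y$ on pairs of vertical vector fields, by a direct algebraic combination of the two identities of Lemma~2.1(1) together with the pointwise relations satisfied by $\phi,\omega,B,C$.

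First I would recall the standard characterization: since $\ker F_*$ is always integrable with leaves the fibers $F^{-1}(q)$, the distribution defines a totally geodesic foliation iff each fiber is a totally geodesic submanifold of $M$. Because $\mathcal{T}_X Y = \mathcal{H}\nabla_X Y$ for $X,Y\in\Gamma(\ker F_*)$, this is equivalent to $\mathcal{T}_X Y = 0$ for every $X,Y\in\Gamma(\ker F_*)$. The goal is therefore to show that the displayed expression equals $-\mathcal{T}_X Y$.

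Next I would apply $\omega$ to the first identity of Lemma~2.1(1) and $C$ to the second identity, producing
\begin{align*}
 \omega(\widehat{\nabla}_X \phi Y + \mathcal{T}_X \omega Y) &= \omega\phi\,\widehat{\nabla}_X Y + \omega B\,\mathcal{T}_X Y,\\
 C(\mathcal{T}_X \phi Y + \mathcal{H}\nabla_X \omega Y) &= C\omega\,\widehat{\nabla}_X Y + C^2\,\mathcal{T}_X Y.
\end{align*}
Summing and invoking the displayed relations $\omega\phi + C\omega = 0$ and $C^2 + \omega B = -\mathrm{id}$, the $\widehat{\nabla}_X Y$-contributions cancel while the $\mathcal{T}_X Y$-contributions collapse to $-\mathcal{T}_X Y$. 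This exhibits the proposition's condition as $-\mathcal{T}_X Y = 0$ and settles both directions at once.

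No serious obstacle is anticipated: the only mildly clever step is spotting that, among the four algebraic identities linking $\phi,\omega,B,C$, it is precisely $\omega\phi+C\omega=0$ and $C^2+\omega B=-\mathrm{id}$ that orchestrate the cancellation, which in turn dictates applying $\omega$ to the vertical-valued line and $C$ to the horizontal-valued line of Lemma~2.1(1).
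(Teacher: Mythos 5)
Your proof is correct. The only substantive point to verify is the algebra: applying $\omega$ to the vertical identity and $C$ to the horizontal identity of Lemma 2.1(1) gives
$(\omega\phi+C\omega)\widehat{\nabla}_X Y+(\omega B+C^2)\mathcal{T}_X Y$, which by $\omega\phi+C\omega=0$ and $C^2+\omega B=-\mathrm{id}$ is exactly $-\mathcal{T}_X Y$; this is valid, since all the operators are applied to vectors in the correct (vertical or horizontal) subbundles, and the reduction of ``totally geodesic foliation'' to $\mathcal{T}_X Y=\mathcal{H}\nabla_X Y=0$ for vertical $X,Y$ is the standard O'Neill characterization. Your route differs from the paper's in packaging rather than in substance: the paper does not invoke Lemma 2.1 at all, but instead writes $\nabla_X Y=-J\nabla_X JY$ (using the K\"ahler condition and $J^2=-\mathrm{id}$ directly), expands $\nabla_X JY=\widehat{\nabla}_X\phi Y+\mathcal{T}_X\phi Y+\mathcal{T}_X\omega Y+\mathcal{H}\nabla_X\omega Y$, applies $J$, and reads off that the horizontal component of $\nabla_X Y$ is minus the displayed expression. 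Both arguments rest on the same K\"ahler identity; yours buys an explicit statement that the proposition's condition is pointwise identical to $-\mathcal{T}_X Y$, making the equivalence with total geodesy of the fibers transparent and reusing the lemma already proved, while the paper's is self-contained and avoids needing the auxiliary identities $\omega\phi+C\omega=0$, $C^2+\omega B=-\mathrm{id}$.
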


\begin{proof}
For $X,Y\in \Gamma(\ker F_*)$,
\begin{align*}
\nabla_X Y&= -J\nabla_X JY= -J(\widehat{\nabla}_X \phi Y+\mathcal{T}_X \phi Y+\mathcal{T}_X \omega Y
             +\mathcal{H}\nabla_X \omega Y)   \\
          &= -(\phi \widehat{\nabla}_X \phi Y+\omega\widehat{\nabla}_X \phi Y+B\mathcal{T}_X \phi Y
             +C\mathcal{T}_X \phi Y+\phi \mathcal{T}_X \omega Y+\omega \mathcal{T}_X \omega Y    \\
          & \ \ \  +B\mathcal{H}\nabla_X \omega Y+C\mathcal{H}\nabla_X \omega
             Y).
\end{align*}
Thus,
$$
\nabla_X Y\in \Gamma(\ker F_*) \Leftrightarrow
\omega(\widehat{\nabla}_X \phi Y+\mathcal{T}_X \omega
Y)+C(\mathcal{T}_X \phi Y+\mathcal{H}\nabla_X \omega Y) =0.
$$
\end{proof}

Similarly, we have

\begin{proposition}
Let $F$ be a  semi-slant submersion from a K\"{a}hler manifold
$(M,g_M,J)$ onto a Riemannian manifold $(N,g_N)$. Then the
distribution $(\ker F_*)^{\perp}$ defines a totally geodesic
foliation if and only if
$$
\phi(\mathcal{V}{\nabla}_X BY+\mathcal{A}_X CY)+B(\mathcal{A}_X
BY+\mathcal{H}\nabla_X CY) = 0 \quad \text{for} \ X,Y\in
\Gamma((\ker F_*)^{\perp}).
$$
\end{proposition}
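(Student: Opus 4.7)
The plan is to mirror the proof of the previous proposition, swapping the roles of vertical and horizontal and using the tensor $\mathcal{A}$ in place of $\mathcal{T}$. The distribution $(\ker F_*)^{\perp}$ defines a totally geodesic foliation precisely when $\nabla_X Y \in \Gamma((\ker F_*)^{\perp})$ for all $X,Y\in \Gamma((\ker F_*)^{\perp})$, so I need to isolate $\mathcal{V}\nabla_X Y$ and show it vanishes iff the displayed identity holds.

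First, since $M$ is Kähler we have $\nabla J = 0$, hence $\nabla_X Y = -J \nabla_X JY$. I would decompose $JY = BY + CY$ with $BY \in \Gamma(\ker F_*)$ and $CY \in \Gamma((\ker F_*)^{\perp})$, so that $\nabla_X JY = \nabla_X BY + \nabla_X CY$. For $X$ horizontal, the definition of $\mathcal{A}$ gives the splittings
\begin{align*}
\nabla_X BY &= \mathcal{V}\nabla_X BY + \mathcal{A}_X BY, \\
\nabla_X CY &= \mathcal{A}_X CY + \mathcal{H}\nabla_X CY,
\end{align*}
where the first summand in each line is vertical and the second horizontal. Grouping by type gives
\[
\nabla_X JY = \bigl(\mathcal{V}\nabla_X BY + \mathcal{A}_X CY\bigr) + \bigl(\mathcal{A}_X BY + \mathcal{H}\nabla_X CY\bigr).
\]

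Next, I apply $-J$ and use $J = \phi + \omega$ on the vertical summand and $J = B + C$ on the horizontal summand. The vertical part of $\nabla_X Y$ is then
\[
\mathcal{V}\nabla_X Y = -\phi\bigl(\mathcal{V}\nabla_X BY + \mathcal{A}_X CY\bigr) - B\bigl(\mathcal{A}_X BY + \mathcal{H}\nabla_X CY\bigr),
\]
while the remaining terms $-\omega(\cdots)-C(\cdots)$ contribute to $\mathcal{H}\nabla_X Y$. Since $(\ker F_*)^{\perp}$ is a totally geodesic foliation iff $\mathcal{V}\nabla_X Y = 0$ for all horizontal $X,Y$, the equivalence follows immediately from the displayed formula.

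There is no real obstacle here; the argument is purely bookkeeping. The only care needed is to track which components of $J$ to use on vertical versus horizontal pieces, and to correctly identify $\mathcal{V}\nabla_X BY$ and $\mathcal{A}_X BY$ (resp.\ $\mathcal{A}_X CY$ and $\mathcal{H}\nabla_X CY$) as the $\mathcal{V}$- and $\mathcal{H}$-projections of $\nabla_X BY$ (resp.\ $\nabla_X CY$) when $X$ is horizontal.
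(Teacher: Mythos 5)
Your proof is correct and is essentially the argument the paper intends: it mirrors the preceding proposition's proof, writing $\nabla_X Y=-J\nabla_X JY$, splitting $\nabla_X(BY+CY)$ into vertical and horizontal parts via $\mathcal{A}$, and reading off the vertical component after applying $J=\phi+\omega$ on vertical and $J=B+C$ on horizontal pieces. The bookkeeping, including the identifications $\mathcal{A}_X BY=\mathcal{H}\nabla_X BY$ and $\mathcal{A}_X CY=\mathcal{V}\nabla_X CY$ for horizontal $X$, is exactly right.
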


\begin{proposition}
Let $F$ be a  semi-slant submersion from a K\"{a}hler manifold
$(M,g_M,J)$ onto a Riemannian manifold $(N,g_N)$. Then the
distribution $\mathcal{D}_1$ defines a totally geodesic foliation if
and only if
$$
Q(\phi\widehat{\nabla}_X \phi Y+B\mathcal{T}_X \phi Y) = 0 \
\text{and} \ \omega\widehat{\nabla}_X \phi Y+C\mathcal{T}_X \phi Y =
0
$$
for $X,Y\in \Gamma(\mathcal{D}_1)$.
\end{proposition}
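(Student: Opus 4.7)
The plan is to mimic the earlier propositions characterizing totally geodesic foliations: the distribution $\mathcal{D}_1$ defines a totally geodesic foliation if and only if $\nabla_X Y \in \Gamma(\mathcal{D}_1)$ for all $X,Y \in \Gamma(\mathcal{D}_1)$. This requires two independent vanishing conditions, namely that the horizontal part of $\nabla_X Y$ vanishes and that the vertical part of $\nabla_X Y$ has no $\mathcal{D}_2$-component.

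First, I would exploit the K\"ahler condition $\nabla J = 0$ to write $\nabla_X Y = -J \nabla_X JY$. Since $Y \in \Gamma(\mathcal{D}_1)$ and $J(\mathcal{D}_1) = \mathcal{D}_1$, we have $\omega Y = 0$ and $JY = \phi Y \in \Gamma(\mathcal{D}_1) \subset \Gamma(\ker F_*)$. Hence, because $X$ is vertical as well, the decomposition $\nabla_X \phi Y = \widehat{\nabla}_X \phi Y + \mathcal{T}_X \phi Y$ applies, with $\widehat{\nabla}_X \phi Y \in \Gamma(\ker F_*)$ and $\mathcal{T}_X \phi Y \in \Gamma((\ker F_*)^{\perp})$.

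Next, I would apply $-J$ to this expression, splitting $J$ as $\phi + \omega$ on the vertical piece and as $B + C$ on the horizontal piece, to obtain
\begin{align*}
\nabla_X Y = -\bigl(\phi\widehat{\nabla}_X \phi Y + \omega\widehat{\nabla}_X \phi Y + B\mathcal{T}_X \phi Y + C\mathcal{T}_X \phi Y\bigr).
\end{align*}
Reading off the vertical and horizontal components, the vertical part of $\nabla_X Y$ is $-(\phi\widehat{\nabla}_X \phi Y + B\mathcal{T}_X \phi Y)$ and the horizontal part is $-(\omega\widehat{\nabla}_X \phi Y + C\mathcal{T}_X \phi Y)$. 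Requiring that $\nabla_X Y$ lie in $\mathcal{D}_1$ is then equivalent to the vanishing of the horizontal part together with the vanishing of the $\mathcal{D}_2$-projection (via $Q$) of the vertical part, which yields exactly the two stated equations.

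There is no real obstacle beyond careful bookkeeping: the entire argument is mechanical once one observes that $\omega Y = 0$ for $Y \in \Gamma(\mathcal{D}_1)$, which collapses the expansion used in the preceding proposition on $\ker F_*$ down to terms involving only $\phi Y$. The only conceptual point to keep in mind is that the vertical part $\phi\widehat{\nabla}_X \phi Y + B\mathcal{T}_X \phi Y$ automatically lies in $\ker F_*$, so its $\mathcal{D}_1$-part need not vanish separately — only the $\mathcal{D}_2$-part does, which is why the first condition is stated with $Q$ rather than outright equality to zero.
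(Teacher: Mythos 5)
Your proposal is correct and follows essentially the same route as the paper: write $\nabla_X Y=-J\nabla_X JY$ using the K\"ahler condition, note $\omega Y=0$ so $JY=\phi Y$, decompose $\nabla_X\phi Y=\widehat{\nabla}_X\phi Y+\mathcal{T}_X\phi Y$, apply $J=\phi+\omega$ on the vertical part and $J=B+C$ on the horizontal part, and read off the horizontal component together with the $Q$-projection of the vertical component. Your closing remark explaining why only the $Q$-part of the vertical piece must vanish is a correct and slightly more explicit justification than the paper gives.
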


\begin{proof}
For $X,Y\in \Gamma(\mathcal{D}_1)$, we get
\begin{align*}
\nabla_X Y&= -J\nabla_X JY= -J(\widehat{\nabla}_X \phi Y+\mathcal{T}_X \phi Y)   \\
          &= -(\phi \widehat{\nabla}_X \phi Y+\omega\widehat{\nabla}_X \phi Y+B\mathcal{T}_X \phi Y
             +C\mathcal{T}_X \phi Y).
\end{align*}
Hence,
$$
\nabla_X Y\in \Gamma(\mathcal{D}_1) \Leftrightarrow Q(\phi
\widehat{\nabla}_X \phi Y+B\mathcal{T}_X \phi Y) = 0 \ \text{and} \
\omega\widehat{\nabla}_X \phi Y+C\mathcal{T}_X \phi Y = 0.
$$
\end{proof}

In a similar way, we obtain

\begin{proposition}
Let $F$ be a  semi-slant submersion from a K\"{a}hler manifold
$(M,g_M,J)$ onto a Riemannian manifold $(N,g_N)$. Then the
distribution $\mathcal{D}_2$ defines a totally geodesic foliation if
and only if
\begin{align*}
  &P(\phi(\widehat{\nabla}_X \phi Y+\mathcal{T}_X \omega Y)+B(\mathcal{T}_X \phi Y
    +\mathcal{H}\nabla_X \omega Y)) = 0     \\
  &\omega(\widehat{\nabla}_X \phi Y+\mathcal{T}_X \omega Y)+C(\mathcal{T}_X \phi Y
    +\mathcal{H}\nabla_X \omega Y) = 0
\end{align*}
for $X,Y\in \Gamma(\mathcal{D}_2)$.
\end{proposition}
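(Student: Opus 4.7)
The plan is to follow exactly the template used in the previous two propositions (for $\ker F_*$ and for $\mathcal{D}_1$), exploiting that on a K\"ahler manifold $\nabla J=0$, so $\nabla_X Y = -J\nabla_X JY$ for any $X,Y$. Fix $X,Y\in\Gamma(\mathcal{D}_2)$. Since $Y\in\Gamma(\ker F_*)$, decompose $JY=\phi Y+\omega Y$, where $\phi Y\in\Gamma(\mathcal{D}_2)\subset\Gamma(\ker F_*)$ (using $\phi\mathcal{D}_2\subset\mathcal{D}_2$) and $\omega Y\in\Gamma((\ker F_*)^{\perp})$.

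Next I would expand $\nabla_X JY=\nabla_X\phi Y+\nabla_X\omega Y$ via the vertical/horizontal splitting and the defining formulas for $\mathcal{T}$: since $X$ is vertical,
\[
\nabla_X\phi Y=\widehat{\nabla}_X\phi Y+\mathcal{T}_X\phi Y,\qquad
\nabla_X\omega Y=\mathcal{T}_X\omega Y+\mathcal{H}\nabla_X\omega Y,
\]
so that
\[
\nabla_X JY=\widehat{\nabla}_X\phi Y+\mathcal{T}_X\phi Y+\mathcal{T}_X\omega Y+\mathcal{H}\nabla_X\omega Y.
\]
Then I would apply $-J$ and split each term according to whether it lies in $\ker F_*$ (use $J=\phi+\omega$) or in $(\ker F_*)^{\perp}$ (use $J=B+C$). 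This yields
\[
\nabla_X Y=-\bigl[\phi(\widehat{\nabla}_X\phi Y+\mathcal{T}_X\omega Y)+B(\mathcal{T}_X\phi Y+\mathcal{H}\nabla_X\omega Y)+\omega(\widehat{\nabla}_X\phi Y+\mathcal{T}_X\omega Y)+C(\mathcal{T}_X\phi Y+\mathcal{H}\nabla_X\omega Y)\bigr],
\]
grouped so that the first two summands lie in $\Gamma(\ker F_*)$ and the last two lie in $\Gamma((\ker F_*)^{\perp})$.

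The condition that $\mathcal{D}_2$ define a totally geodesic foliation is $\nabla_X Y\in\Gamma(\mathcal{D}_2)$, which is equivalent to two independent requirements: the $(\ker F_*)^{\perp}$-component of $\nabla_X Y$ vanishes, and the $\mathcal{D}_1$-component of its vertical part (i.e.\ the image under $P$) vanishes. Reading these two requirements off the expression above gives exactly the stated pair of equations. The ``obstacle'' here is not conceptual but combinatorial: one must be careful to group the $\phi$- and $B$-terms together (the $\mathcal{D}_1$-candidate piece, on which $P$ is applied) and the $\omega$- and $C$-terms together (the horizontal piece), and to verify that no other projections enter. Since $\omega$ already lands in $(\ker F_*)^{\perp}$ and $B$ already lands in $\mathcal{D}_2\subset\ker F_*$ by the identities listed just after the definition, the bookkeeping closes cleanly and the proof finishes in one display, exactly parallel to the proof of the previous proposition on $\mathcal{D}_1$.
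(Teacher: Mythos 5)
Your proposal is correct and is exactly the argument the paper intends: the paper omits the proof of this proposition with ``In a similar way, we obtain,'' referring to the preceding proofs which use the same identity $\nabla_X Y=-J\nabla_X JY$, the same $\mathcal{T}$/$\widehat{\nabla}$ splitting, and the same $\phi+\omega$, $B+C$ decomposition, then read off the $P$-component and horizontal component. Your bookkeeping of which terms are vertical and which are horizontal is accurate, so the proof is complete as written.
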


\begin{theorem}
Let $F$ be a  semi-slant submersion from a K\"{a}hler manifold
$(M,g_M,J)$ onto a Riemannian manifold $(N,g_N)$. Then $F$ is a
totally geodesic map if and only if
\begin{align*}
  &\omega(\widehat{\nabla}_X \phi Y+\mathcal{T}_X \omega Y)+C(\mathcal{T}_X \phi Y
    +\mathcal{H}\nabla_X \omega Y) = 0   \\
  &\omega(\widehat{\nabla}_X BZ+\mathcal{T}_X CZ)+C(\mathcal{T}_X BZ
    +\mathcal{H}\nabla_X CZ) = 0
\end{align*}
for $X,Y\in \Gamma(\ker F_*)$ and $Z\in \Gamma((\ker F_*)^{\perp})$.
\end{theorem}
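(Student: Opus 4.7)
The plan is to unpack total geodesy as the vanishing of the second fundamental form $(\nabla F_*)(U,V)$ on all pairs $U,V\in\Gamma(TM)$, and to reduce this to two conditions via the orthogonal decomposition $TM=\ker F_*\oplus(\ker F_*)^{\perp}$. By bilinearity and symmetry of $(\nabla F_*)$, three cases arise. The horizontal/horizontal case is automatic for any Riemannian submersion: for basic $Z_1,Z_2\in(\ker F_*)^{\perp}$, $F_*$-relatedness gives $F_*(\mathcal{H}\nabla_{Z_1}Z_2)=\nabla^N_{F_*Z_1}F_*Z_2$ while the vertical part of $\nabla_{Z_1}Z_2$ is annihilated by $F_*$, so $(\nabla F_*)(Z_1,Z_2)=0$; tensoriality of $(\nabla F_*)$ extends this to all horizontal pairs. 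Only the vertical/vertical and vertical/horizontal cases remain.

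For $X,Y\in\Gamma(\ker F_*)$ I would use $F_*Y=0$ to reduce $(\nabla F_*)(X,Y)=-F_*(\mathcal{H}\nabla_X Y)$; since $F_*$ restricts to an isometric isomorphism on horizontal subspaces, this vanishes precisely when $\mathcal{H}\nabla_X Y=0$. Invoking $\nabla J=0$, rewrite $\nabla_X Y=-J\nabla_X JY$, expand $JY=\phi Y+\omega Y$, decompose $\nabla_X\phi Y$ and $\nabla_X\omega Y$ through $\widehat{\nabla},\mathcal{T},\mathcal{H}\nabla$, and then distribute $-J$ using $\phi,\omega,B,C$. Reading off the horizontal component produces
\[
\mathcal{H}\nabla_X Y=-\bigl(\omega(\widehat{\nabla}_X\phi Y+\mathcal{T}_X\omega Y)+C(\mathcal{T}_X\phi Y+\mathcal{H}\nabla_X\omega Y)\bigr),
\]
which is precisely the identity that drove the earlier proposition on when $\ker F_*$ defines a totally geodesic foliation; its vanishing is the first condition.

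For $X\in\Gamma(\ker F_*)$ and $Z\in\Gamma((\ker F_*)^{\perp})$, tensoriality of $(\nabla F_*)$ allows me to take $Z$ locally basic, so that $F_*Z$ is $F$-related to a vector field on $N$ and $\nabla^F_X F_*Z=\nabla^N_{F_*X}F_*Z=0$ (because $F_*X=0$); this leaves $(\nabla F_*)(X,Z)=-F_*(\mathcal{H}\nabla_X Z)$, which vanishes iff $\mathcal{H}\nabla_X Z=0$. Applying the same Kähler move with $JZ=BZ+CZ$ and decomposing $\nabla_X BZ$ (both arguments vertical) and $\nabla_X CZ$ (vertical and horizontal) via the same tensors yields
\[
\mathcal{H}\nabla_X Z=-\bigl(\omega(\widehat{\nabla}_X BZ+\mathcal{T}_X CZ)+C(\mathcal{T}_X BZ+\mathcal{H}\nabla_X CZ)\bigr),
\]
which is the second condition. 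The only real obstacle is dispatching the term $\nabla^F_X F_*Z$ in the vertical/horizontal case; the reduction to a basic extension handles this, after which the proof becomes a mechanical tracking of the $(\phi,\omega,B,C)$ and $(\mathcal{T},\widehat{\nabla})$ decompositions, completely parallel to the vertical/vertical computation.
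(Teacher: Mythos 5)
Your proposal is correct and follows essentially the same route as the paper: split $(\nabla F_*)$ into horizontal--horizontal, vertical--vertical, and mixed cases, use the K\"ahler identity $\nabla_X Y=-J\nabla_X JY$ with the $(\phi,\omega,B,C)$ and $(\widehat{\nabla},\mathcal{T},\mathcal{H}\nabla)$ decompositions, and read off the horizontal component, with symmetry of $(\nabla F_*)$ covering the mixed case. Your only additions are explicit justifications (tensoriality plus basic extensions) of the facts $(\nabla F_*)(Z_1,Z_2)=0$ and $\nabla^F_X F_*Z=0$, which the paper simply asserts.
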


\begin{proof}
Since $F$ is a Riemannian submersion, we have
$$
(\nabla F_*)(Z_1,Z_2)=0 \quad \text{for} \ Z_1,Z_2\in \Gamma((\ker
F_*)^{\perp}).
$$
For $X,Y\in \Gamma(\ker F_*)$, we obtain
\begin{align*}
(\nabla F_*)(X,Y)&= -F_* (\nabla_X Y) = F_* (J\nabla_X (\phi Y+\omega Y))   \\
          &= F_* (\phi \widehat{\nabla}_X \phi Y+\omega\widehat{\nabla}_X \phi Y+B\mathcal{T}_X \phi Y
             +C\mathcal{T}_X \phi Y+\phi \mathcal{T}_X \omega Y+\omega \mathcal{T}_X \omega Y   \\
          & \ \ \  +B\mathcal{H}\nabla_X \omega Y+C\mathcal{H}\nabla_X \omega Y).
\end{align*}
Thus,
$$
(\nabla F_*)(X,Y) = 0 \Leftrightarrow \omega(\widehat{\nabla}_X \phi
Y+\mathcal{T}_X \omega Y)+C(\mathcal{T}_X \phi Y
    +\mathcal{H}\nabla_X \omega Y) = 0.
$$
For $X\in \Gamma(\ker F_*)$ and $Z\in \Gamma((\ker F_*)^{\perp})$,
 we get
\begin{align*}
(\nabla F_*)(X,Z)&= -F_* (\nabla_X Z) = F_* (J\nabla_X (BZ+CZ))   \\
          &= F_* (\phi \widehat{\nabla}_X BZ+\omega\widehat{\nabla}_X BZ+B\mathcal{T}_X BZ
             +C\mathcal{T}_X BZ+\phi \mathcal{T}_X CZ+\omega \mathcal{T}_X CZ   \\
          & \ \ \  +B\mathcal{H}\nabla_X CZ+C\mathcal{H}\nabla_X CZ).
\end{align*}
Hence,
$$
(\nabla F_*)(X,Z) = 0 \Leftrightarrow \omega(\widehat{\nabla}_X
BZ+\mathcal{T}_X CZ)+C(\mathcal{T}_X BZ
    +\mathcal{H}\nabla_X CZ) = 0.
$$
Since $(\nabla F_*)(X,Z)=(\nabla F_*)(Z,X)$, we get the result.
\end{proof}

Let $F$ be a  semi-slant submersion from a K\"{a}hler manifold
$(M,g_M,J)$ onto a Riemannian manifold $(N,g_N)$. Assume that $\mathcal{D}_1$ is integrable. Choose a local
orthonormal frame $\{ v_1, \cdots, v_l \}$ of $\mathcal{D}_2$ and a
local orthonormal frame $\{ e_1, \cdots, e_{2k} \}$ of
$\mathcal{D}_1$ such that $e_{2i}=Je_{2i-1}$ for $1\leq i\leq k$.
Since $\displaystyle{ F_*(\nabla_{Je_{2i-1}} Je_{2i-1}) =
-F_*(\nabla_{e_{2i-1}} e_{2i-1}) }$ for $1\leq i\leq k$, we have
$$
trace (\nabla F_*) = 0 \Leftrightarrow \sum_{j=1}^l F_*(\nabla_{v_j}
v_j) = 0.
$$

\begin{theorem}
Let $F$ be a  semi-slant submersion from a K\"{a}hler manifold
$(M,g_M,J)$ onto a Riemannian manifold $(N,g_N)$ such that $\mathcal{D}_1$ is integrable. Then $F$ is a
harmonic map if and only if
$$
trace (\nabla F_*) = 0 \quad \text{on} \ \mathcal{D}_2.
$$
\end{theorem}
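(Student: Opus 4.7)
The plan is to reduce the trace of $\nabla F_*$ over all of $TM$ to just a trace over $\mathcal{D}_2$, by killing off the contributions from the horizontal distribution and from $\mathcal{D}_1$ in turn. Much of the argument has in fact been carried out in the unnumbered paragraph immediately preceding the theorem; the task is to assemble the pieces cleanly.

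First I would note that, by definition, $F$ is harmonic iff $\mathrm{trace}(\nabla F_*)=0$ when the trace is taken over a local orthonormal frame of $TM$. Since $F$ is a Riemannian submersion, a standard fact (used implicitly in the theorem on totally geodesic maps above) gives $(\nabla F_*)(Z_1,Z_2)=0$ for $Z_1,Z_2\in\Gamma((\ker F_*)^{\perp})$. So the horizontal orthonormal vectors contribute nothing, and the trace reduces to a sum over a local orthonormal frame of $\ker F_*$. Next I would split this frame along the decomposition $\ker F_*=\mathcal{D}_1\oplus\mathcal{D}_2$, writing the remaining sum as a $\mathcal{D}_1$-contribution plus a $\mathcal{D}_2$-contribution.

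The core step is the $\mathcal{D}_1$-contribution. Using $J\mathcal{D}_1=\mathcal{D}_1$, I would pick a local orthonormal frame $\{e_1,\dots,e_{2k}\}$ of $\mathcal{D}_1$ of the form $e_{2i}=Je_{2i-1}$ (this is exactly the frame in the paragraph before the theorem). Then for each $i$, integrability of $\mathcal{D}_1$ gives $[e_{2i-1},Je_{2i-1}]\in\Gamma(\mathcal{D}_1)\subset\Gamma(\ker F_*)$, so $F_*([e_{2i-1},Je_{2i-1}])=0$ and also $F_*(J[e_{2i-1},Je_{2i-1}])=0$ because $J\mathcal{D}_1=\mathcal{D}_1$. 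Combining these with the K\"ahler identity $\nabla J=0$ yields
\[
F_*(\nabla_{Je_{2i-1}} Je_{2i-1})= J\,F_*\,\text{-computation}\ =\ -F_*(\nabla_{e_{2i-1}} e_{2i-1}),
\]
which is precisely the equality asserted in the preamble to the theorem. Pairing the $e_{2i-1}$- and $e_{2i}$-terms of the trace, the $\mathcal{D}_1$-contribution cancels in pairs and vanishes.

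What remains is only the sum over an orthonormal frame $\{v_1,\dots,v_\ell\}$ of $\mathcal{D}_2$, namely $\sum_j F_*(\nabla_{v_j}v_j)$, which equals $-\sum_j (\nabla F_*)(v_j,v_j)=-\mathrm{trace}(\nabla F_*)|_{\mathcal{D}_2}$. Hence $\mathrm{trace}(\nabla F_*)=0$ globally iff $\mathrm{trace}(\nabla F_*)|_{\mathcal{D}_2}=0$, which is the statement of the theorem. The only nontrivial step is the pairwise cancellation on $\mathcal{D}_1$, and that is exactly where both hypotheses (K\"ahler and integrability of $\mathcal{D}_1$) are used; everything else is bookkeeping with the decomposition.
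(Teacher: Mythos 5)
Your proposal is correct and follows essentially the same route as the paper, which proves the theorem via the unnumbered paragraph preceding it: the horizontal contribution vanishes because $F$ is a Riemannian submersion, and the $\mathcal{D}_1$ contribution cancels in pairs using the adapted frame $e_{2i}=Je_{2i-1}$ and the identity $F_*(\nabla_{Je_{2i-1}}Je_{2i-1})=-F_*(\nabla_{e_{2i-1}}e_{2i-1})$. In fact you go slightly further than the paper by actually deriving that identity (via $\nabla J=0$, torsion-freeness, and integrability of $\mathcal{D}_1$ forcing $J[e_{2i-1},Je_{2i-1}]\in\Gamma(\mathcal{D}_1)\subset\ker F_*$), which the paper merely asserts.
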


Let $F : (M,g_M)\mapsto (N,g_N)$ be a Riemannian submersion. The map
$F$ is called a Riemannian submersion {\em with totally umbilical
fibers} if
\begin{eqnarray}\label{umbilic}
\mathcal{T}_X Y = g_M (X, Y)H \quad \text{for} \ X,Y\in \Gamma(\ker
F_*),
\end{eqnarray}
where $H$ is the mean curvature vector field of the fiber.

In a similar way with Lemma 4.2 of [18], we obtain

\begin{lemma}
Let $F$ be a semi-slant submersion with totally umbilical fibers
from a K\"{a}hler manifold $(M,g_M,J)$ onto a Riemannian manifold
$(N,g_N)$. Then we have
$$
H\in \Gamma(\omega \mathcal{D}_2).
$$
\end{lemma}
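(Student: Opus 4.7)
The plan is to prove $H \perp \Gamma(\mu)$; since $(\ker F_*)^\perp = \omega\mathcal{D}_2 \oplus \mu$, this orthogonality will force $H \in \Gamma(\omega\mathcal{D}_2)$. So I fix an arbitrary $V \in \Gamma(\mu)$ and aim at the identity $g_M(H,V)=0$.

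The core idea is to select a unit vertical vector field $X$ such that $JX$ is again vertical and $g_M(X,JX)=0$. The canonical choice is any unit $X\in\Gamma(\mathcal{D}_1)$, which works because $\mathcal{D}_1$ is $J$-invariant. I then chain three identities, each consuming exactly one hypothesis. First, the totally umbilical condition (\ref{umbilic}) gives
$$
g_M(H,V)=g_M(\mathcal{T}_X X,V)=g_M(\nabla_X X,V).
$$
Second, the K\"ahler relation $\nabla J=0$ yields $\nabla_X X=-J\nabla_X JX$, and the skew-symmetry $g_M(J\,\cdot\,,\,\cdot\,)=-g_M(\,\cdot\,,J\,\cdot\,)$ rewrites the right-hand side as
$$
g_M(H,V)=g_M(\nabla_X JX,\,JV).
$$
Third, because $JX\in\Gamma(\mathcal{D}_1)\subset\Gamma(\ker F_*)$ and $g_M(X,JX)=0$, the umbilical hypothesis applied to the pair $(X,JX)$ gives $\mathcal{T}_X JX=0$, so $\nabla_X JX=\widehat{\nabla}_X JX$ is purely vertical. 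Since $\mu$ is $J$-invariant, $JV\in\Gamma(\mu)\subset\Gamma((\ker F_*)^\perp)$ is horizontal, so the pairing of a vertical vector with $JV$ vanishes. Combining, $g_M(H,V)=0$.

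The main obstacle is the degenerate case $\mathcal{D}_1=\{0\}$: no $X$ of the above form exists, and attempts to run the analogous calculation with $X\in\Gamma(\mathcal{D}_2)$ collapse to tautologies, because umbilicality, skew-symmetry of $J$, and the slant relation $\phi^2=-\cos^2\theta\,\mathrm{id}$ on $\mathcal{D}_2$ automatically balance the relevant inner products. In that degenerate case $F$ is actually a slant submersion, and the conclusion is precisely Lemma~4.2 of \textup{[18]}; this is exactly the sense in which our proof runs ``in a similar way'' with that reference. Thus the two regimes together—the direct $\mathcal{D}_1$-argument above and the appeal to \textup{[18]} when $\mathcal{D}_1=\{0\}$—complete the proof.
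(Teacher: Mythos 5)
Your main argument is correct and is essentially the paper's own proof in streamlined form: the paper takes two sections $X,Y\in\Gamma(\mathcal{D}_1)$, uses the K\"ahler identity plus umbilicality to get $g_M(X,JY)\,g_M(H,W)=-g_M(X,Y)\,g_M(H,JW)$ for $W\in\Gamma(\mu)$, and symmetrizes in $X,Y$ to force $g_M(H,JW)=0$, while you simply take $Y=X$ (so $g_M(X,JX)=0$ and $\mathcal{T}_XJX=0$ kill the offending terms directly); both versions use exactly the same ingredients, namely $\nabla J=0$, $\mathcal{T}_XY=g_M(X,Y)H$ applied inside the $J$-invariant distribution $\mathcal{D}_1$, and $J\mu=\mu$, and both tacitly need a nonzero section of $\mathcal{D}_1$. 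Your treatment of the degenerate case $\mathcal{D}_1=\{0\}$, however, does not hold up: the reference [18] is Sahin's paper on \emph{semi-invariant} submersions (it is the source whose Lemma 4.2 the paper imitates, which is why the text says ``in a similar way with Lemma 4.2 of [18]''), not the slant submersion paper [17], and a proper slant submersion with $\theta\neq\frac{\pi}{2}$ is not semi-invariant, so that lemma does not cover the case you delegate to it; likewise the assertion that the computation with $X\in\Gamma(\mathcal{D}_2)$ ``collapses to tautologies'' is not an argument, since the term $\mathcal{H}\nabla_X\omega X$ does not cancel automatically. To be fair, the paper's proof is silent on this case as well, so on the intended (proper, $\mathcal{D}_1\neq\{0\}$) situation your proof is complete and matches the paper; just drop or correct the appeal to [18].
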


\begin{proof}
For $X,Y\in \Gamma(\mathcal{D}_1)$ and $W\in \Gamma(\mu)$, we get
$$
\mathcal{T}_X JY+\widehat{\nabla}_X JY = \nabla_X JY = J\nabla_X Y =
B\mathcal{T}_X Y+C\mathcal{T}_X Y+\phi\widehat{\nabla}_X
Y+\omega\widehat{\nabla}_X Y
$$
so that
$$
g_M (\mathcal{T}_X JY, W) = g_M (C\mathcal{T}_X Y, W).
$$
By (\ref{umbilic}), with a simple calculation we obtain
$$
g_M (X, JY) g_M (H, W) = -g_M (X, Y) g_M (H, JW).
$$
Interchanging the role of $X$ and $Y$, we get
$$
g_M (Y, JX) g_M (H, W) = -g_M (Y, X) g_M (H, JW)
$$
so that combining the above two equations, we have
$$
g_M (X, Y) g_M (H, JW) = 0
$$
which means $H\in \Gamma(\omega \mathcal{D}_2)$, since $J\mu = \mu$.
Therefore, we obtain the result.
\end{proof}

\begin{remark}
Let $F$ be a semi-slant submersion from a K\"{a}hler manifold $(M,
g_M, J)$ onto a Riemannian manifold $(N, g_N)$. Then there is a
distribution $\mathcal{D}_1\subset \ker F_*$ such that
$$
\ker F_* =\mathcal{D}_1\oplus \mathcal{D}_2, \
J(\mathcal{D}_1)=\mathcal{D}_1,
$$
and the angle $\theta=\theta(X)$ between $JX$ and the space
$(\mathcal{D}_2)_q$ is constant for nonzero $X\in (\mathcal{D}_2)_q$
and $q\in M$, where $\mathcal{D}_2$ is the orthogonal complement of
$\mathcal{D}_1$ in $\ker F_*$. Furthermore,
$$
\phi \mathcal{D}_2 \subset \mathcal{D}_2, \omega \mathcal{D}_2
\subset (\ker F_*)^{\perp}, (\ker
F_*)^{\perp}=\omega\mathcal{D}_2\oplus \mu,
$$
where $\mu$ is the orthogonal complement of $\omega \mathcal{D}_2$
in $(\ker F_*)^{\perp}$ and is invariant  under $J$. As we know, the
holomorphic sectional curvatures determine the Riemannian curvature
tensor in a K\"{a}hler manifold.

Given a plane $P$ being invariant by $J$ in $T_p M$, $p\in M$, there
is an orthonormal basis $\{ X, JX \}$ of $P$. Denote by $K(P)$,
$K_*(P)$, and $\widehat{K}(P)$ the sectional curvatures of the plane
$P$ in $M$, $N$, and the fiber $F^{-1}(F(p))$, respectively, where
$K_*(P)$ denotes the sectional curvature of the plane $P_* = < F_*X,
F_*JX > $ in $N$. Let $K(X\wedge Y)$ be the sectional curvature of
the plane spanned by the tangent vectors $X, Y\in T_p M$, $p\in M$.
Using both Corollary 1 of [14, p.465] and (1.27) of [7, p.12], we
obtain the following :

\begin{enumerate}

\item If $P\subset (\mathcal{D}_1)_p$, then with some computations
we have
$$
K(P)=\widehat{K}(P)+|\mathcal{T}_X X|^2-|\mathcal{T}_X JX|^2-g_M
(\mathcal{T}_X X, J[JX, X]).
$$

\item If $P\subset (\mathcal{D}_2\oplus \omega\mathcal{D}_2)_p$ with $X\in
(\mathcal{D}_2)_p$, then we get

\begin{align*}
K(P)&= \cos^2\theta \cdot K(X\wedge \phi X)+2g_M ((\nabla_{\phi X}
T)(X,X)-(\nabla_{X} T)(\phi X,X), \omega X)   \\
    & \ \ \  +\sin^2\theta \cdot K(X\wedge \omega X).
\end{align*}

\item If $P\subset (\mu)_p$, then we obtain
$$
K(P)= K_* (P)-3|\mathcal{V}J\nabla_X X|^2.
$$
\end{enumerate}
\end{remark}

\section{Examples}\label{exam}

\begin{example}
Let $F$ be a slant submersion from an almost Hermitian manifold
$(M,g_M,J)$ onto a Riemannian manifold $(N,g_N)$ \cite{S}. Then the
map $F$ is a semi-slant submersion with $\mathcal{D}_2 = \ker F_*$.
\end{example}

\begin{example}
Let $F$ be a semi-invariant submersion from an almost Hermitian
manifold $(M,g_M,J)$ onto a Riemannian manifold $(N,g_N)$ \cite{S2}.
Then the map $F$ is a semi-slant submersion with the semi-slant
angle $\theta=\frac{\pi}{2}$.
\end{example}

\begin{example}
Let $F$ be an almost h-slant submersion from a hyperk\"{a}hler
manifold $(M,g_M,I,J,K)$ onto a Riemannian manifold $(N,g_N)$ such
that $(I,J,K)$ is an almost h-slant basis \cite{P}. Then the map $F
: (M,g_M,R)\mapsto (N,g_N)$ is a semi-slant submersion with
$\mathcal{D}_2 = \ker F_*$ for $R\in \{ I,J,K \}$.
\end{example}

\begin{example}
Let $F$ be an almost h-semi-invariant submersion from a
hyperk\"{a}hler manifold $(M,g_M,I,J,K)$ onto a Riemannian manifold
$(N,g_N)$ such that $(I,J,K)$ is an almost h-semi-invariant basis
\cite{P2}. Then the map $F : (M,g_M,R)\mapsto (N,g_N)$ is a
semi-slant submersion with the semi-slant angle
$\theta=\frac{\pi}{2}$ for $R\in \{ I,J,K \}$.
\end{example}

\begin{example}
Define a map $F : \mathbb{R}^6 \mapsto \mathbb{R}^2$ by
$$
F(x_1,x_2,\cdots, x_6) = (x_3 \sin \alpha-x_5 \cos \alpha,x_6),
$$
where $\alpha\in (0,\frac{\pi}{2})$. Then the map $F$ is a
semi-slant submersion such that
$$
\mathcal{D}_1 = <\frac{\partial}{\partial x_1},
\frac{\partial}{\partial x_2}> \ \text{and} \ \mathcal{D}_2 =
<\frac{\partial}{\partial x_4}, \cos \alpha \frac{\partial}{\partial
x_3}+\sin \alpha \frac{\partial}{\partial x_5}>
$$
with the semi-slant angle $\theta=\alpha$.
\end{example}

\begin{example}
Define a map $F : \mathbb{R}^8 \mapsto \mathbb{R}^2$ by
$$
F(x_1,x_2,\cdots, x_8) = (\frac{x_5-x_8}{\sqrt{2}},x_6).
$$
Then the map $F$ is a semi-slant submersion such that
$$
\mathcal{D}_1 = <\frac{\partial}{\partial x_1},
\frac{\partial}{\partial x_2},\frac{\partial}{\partial
x_3},\frac{\partial}{\partial x_4}> \ \text{and} \ \mathcal{D}_2 =
<\frac{\partial}{\partial x_7}, \frac{\partial}{\partial
x_5}+\frac{\partial}{\partial x_8}>
$$
with the semi-slant angle $\theta=\frac{\pi}{4}$.
\end{example}

\begin{example}
Define a map $F : \mathbb{R}^{10} \mapsto \mathbb{R}^5$ by
$$
F(x_1,x_2,\cdots, x_{10}) =
(x_2,x_1,\frac{x_5+x_6}{\sqrt{2}},\frac{x_7+x_9}{\sqrt{2}},\frac{x_8+x_{10}}{\sqrt{2}}).
$$
Then the map $F$ is a semi-slant submersion such that
$$
\mathcal{D}_1 = <\frac{\partial}{\partial x_3},
\frac{\partial}{\partial x_4},-\frac{\partial}{\partial
x_7}+\frac{\partial}{\partial x_9},-\frac{\partial}{\partial
x_8}+\frac{\partial}{\partial x_{10}}> \ \text{and} \ \mathcal{D}_2
= <-\frac{\partial}{\partial x_5}+\frac{\partial}{\partial x_6}>
$$
with the semi-slant angle $\theta=\frac{\pi}{2}$.
\end{example}

\begin{example}
Define a map $F : \mathbb{R}^{10} \mapsto \mathbb{R}^4$ by
$$
F(x_1,x_2,\cdots, x_{10}) =
(\frac{x_3-x_5}{\sqrt{2}},x_6,\frac{x_7-x_9}{\sqrt{2}},x_8).
$$
Then the map $F$ is a semi-slant submersion such that
$$
\mathcal{D}_1 = <\frac{\partial}{\partial x_1},
\frac{\partial}{\partial x_2}> \ \text{and} \ \mathcal{D}_2 =
<\frac{\partial}{\partial x_3}+\frac{\partial}{\partial x_5},
\frac{\partial}{\partial x_7}+\frac{\partial}{\partial
x_9},\frac{\partial}{\partial x_4},\frac{\partial}{\partial x_{10}}>
$$
with the semi-slant angle $\theta=\frac{\pi}{4}$.
\end{example}

\begin{example}
Define a map $F : \mathbb{R}^8 \mapsto \mathbb{R}^4$ by
$$
F(x_1,x_2,\cdots, x_8) = (x_1,x_2,x_3 \cos \alpha-x_5 \sin
\alpha,x_4\sin \beta-x_6\cos \beta),
$$
where $\alpha$ and $\beta$ are constant. Then the map $F$ is a
semi-slant submersion such that
$$
\mathcal{D}_1 = <\frac{\partial}{\partial
x_7},\frac{\partial}{\partial x_8}> \ \text{and} \ \mathcal{D}_2 =
<\sin \alpha\frac{\partial}{\partial x_3}+\cos
\alpha\frac{\partial}{\partial x_5}, \cos
\beta\frac{\partial}{\partial x_4}+\sin
\beta\frac{\partial}{\partial x_6}>
$$
with the semi-slant angle $\theta$ with $\cos\theta=|\sin
(\alpha+\beta)|$.
\end{example}

\begin{example}
Let G be a slant submersion from an almost Hermitian manifold
$(M_1,g_{M_1},J_1)$ onto a Riemannian manifold $(N,g_N)$ with the
slant angle $\theta$ and $(M_2,g_{M_2},J_2)$ an almost Hermitian
manifold. Denote by $(M,g,J)$ the warped product of
$(M_1,g_{M_1},J_1)$ and $(M_2,g_{M_2},J_2)$ by a positive function
$f$ on $M_1$ [7], where $J=J_1\times J_2$. Define a map $F :
 (M,g,J)\mapsto (N,g_N)$ by
$$
F(x,y) = G(x) \quad \text{for} \ x\in M_1 \ \text{and} \ y\in M_2.
$$
Then the map $F$ is a semi-slant submersion such that
$$
\mathcal{D}_1 = TM_2 \ \text{and} \ \mathcal{D}_2 = \ker G_*
$$
with the semi-slant angle $\theta$.
\end{example}

\end{document}